\def\be{\begin{equation}}
\def\ee{\end{equation}}
\def\bq{\begin{eqnarray}}
\def\eq{\end{eqnarray}}
\def\beq{\begin{eqnarray*}}
\def\eeq{\end{eqnarray*}}
\newtheorem {theorem} {Theorem}%[section]
\newtheorem {corollary} [theorem]{Corollary}
\newtheorem {conjecture} [theorem]{Conjecture}
\newcommand{\R}{\mathbb{R}}
\begin{document}
\title[Various aspects of differential equations  ]
{Various aspects of differential equations having a complete set of independent first integrals }
\author[ R. Ram\'{\i}rez and N. Sadovskaia]
{ Rafael Ram\'{\i}rez$^2$ and Natalia Sadovskaia$^3$}
\address{$^2$ Departament d'Enginyeria Inform\`{a}tica i Matem\`{a}tiques, Universitat Rovira i Virgili, Avinguda dels Pa\"{\i}sos Catalans 26, 43007 Tarragona, Catalonia, Spain.}
\email{rafaelorlando.ramirez@urv.cat}

\address{$^3$ Departament de Matem\`{a}tica Aplicada II, Universitat Polit\`{e}cnica de Catalunya, C. Pau Gargallo 5, 08028 Barcelona, Catalonia, Spain.}
\email{natalia.sadovskaia@upc.edu}
\date{}
\dedicatory{}
\maketitle

\maketitle

\begin{abstract}

In this paper we study the differential equations in $D\subseteq \R^{2N}$ having a complete set of independent first integrals. In particular we study the case when the first integrals are
\[f_\nu=(Ax_\nu+By_\nu)^2+\displaystyle\sum_{j=1}^{N}\dfrac{\left(x_\nu y_j-x_jy_\nu\right)^2}{a_\nu-a_j},\]for $\nu=1,\ldots,N,$
where $A,B$ and $a_1<a_2\ldots<a_N$ are constants.
\end{abstract}

 {MSC:} (2000) {  34A34.}

{Keywords: }\quad {Hamiltonian systems,  first integrals . }

\section{Introduction.}
Let $\textsc{D}$ be an open subset of $\mathbb{R}^{2N}.$ By
definition an autonomous differential system is a system of the
form
\begin{equation}\label{0}
\dot{\textbf{x}}=\textbf{X}(\textbf{x}),\quad
\textbf{x}\in\textsc{D},
\end{equation}
where the dependent variables $\textbf{x}=(x_1,\ldots,x_{2N})$ are
real, the independent variable (the time $t$) is real and the
$\mathcal{C}^1$ functions $\textbf{X}(\textbf{x})=
(X_1(\textbf{x}), ...,X_K(\textbf{x}))$ are defined in the open
set $\textsc{D}.$

\smallskip

For simplicity we shall assume the underlying functions to be of
 class $ \mathcal{C}^\infty,$ although most results remain valid
 under weaker hypotheses.

 Below we use the following notations
\[\begin{array}{rl}
 |S|=&\{h_1,\ldots,h_{2N}\}^*=\left|
\begin{array}{ccccc}
dh_1(\partial_1)& \ldots& dh_1 (\partial_{2N}) \\
\vdots & & \vdots \\
\vdots & & \vdots \\
dh_{2N}(\partial_1)
& \ldots & dh_{2N}(\partial_{2N})\\
\end{array}
\right|,\vspace{0.20cm}\\
x_{N+j}=&y_j,\vspace{0.20cm}\\
\end{array}
\]for $j=1,\ldots,N.$
Our main result is the following
\begin{theorem}\label{main5}
Let $f_j=f_j(x_1,\ldots,x_N,\,y_1,\ldots,y_N)$ for $j=1,2,\ldots,N$  be a
given set of independent functions defined  in an open set
$\textsc{D}\subset{\mathbb{R}^{2N}}$ and such that

i)\quad If
\[|S|_0= \{f_1,\ldots,f_{N},x_1,\ldots,x_N\}^*\ne 0.\]
 Then the differential systems in $\textsc{D}$ which admit the set of
first integrals $f_j$ for $j=1,2,\ldots,N$ are
\begin{equation}
\label{550}
\begin{array}{rl}
\dot{x}_k=&\dfrac{\partial H}{\partial y_k},\vspace{0.20cm}\\
 \dot{y}_k=&-\dfrac{\partial H}{\partial
x_k}+\vspace{0.20cm}\\
&\dfrac{1}{|S|_0}\displaystyle\sum_{j=1}^{N}\{H,f_j\}\{f_1,\ldots,f_{j-1},y_k,f_{j+1},\ldots,f_N,x_1,\ldots,x_N\}^*,
\end{array}
\end{equation} for $k=1,\ldots,N$ where
$H=H(x_1,\ldots,x_N,y_1,\ldots,y_N),$ is an arbitrary function and
\begin{equation}
\label{55}
\displaystyle\sum_{n=1}^{N}\left(\frac{\partial H}{\partial y_j}
\frac{\partial H}{\partial y_j}\frac{\partial f_\alpha}{\partial
x_j}-\frac{\partial H}{\partial y_j}\frac{\partial H}{\partial
y_j}\frac{\partial f_\alpha}{\partial
x_j}\right)=\{H,\,f_\alpha\}.
\end{equation}

ii)\quad If
\[|S|_0=0,\quad |S|_N= \{f_1,\ldots,f_{N},x_1,\ldots,x_{N-1},y_1\}^*\ne 0.\]
Then the differential systems in $\textsc{D}$ which admit the set of
first integrals $f_j$ for $j=1,2,\ldots,N$ are
\begin{equation}\label{5}
\begin{array}{rl}
 \dot{x}_k=&\dfrac{\partial H}{\partial y_k},\vspace{0.20cm}\\
 \dot{y}_k=&-\dfrac{\partial
H}{\partial
 x_k}+\vspace{0.20cm}\\
 &\dfrac{1}{|S|_N}\displaystyle\sum_{j=1}^{N}\{H,f_j\}\{f_1,\ldots,f_{j-1},y_k,f_{j+1},\ldots,f_N,x_1,\ldots,x_N\}^*+\vspace{0.20cm}\\
 &\lambda\,\{f_1,\ldots,f_{N},x_1,\ldots,x_{k-1},y_1,x_{k+1},\ldots,x_{N-1}\}^*,
\end{array}
 \end{equation} for $k=1,\ldots,N,$ where $H=H(x_1,\ldots,x_N,y_1,\ldots,y_N),$ and $\lambda=\lambda\,(x_1,\ldots,x_N,y_1,\ldots,y_N)$ are arbitrary
functions .
\end{theorem}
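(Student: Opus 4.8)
The plan is to reduce the whole question to fibrewise linear algebra. A $\mathcal{C}^1$ function $g$ is a first integral of $\dot{\mathbf x}=\mathbf X$ precisely when $\mathbf X(g)=\sum_i X_i\,\partial_i g\equiv 0$; hence $f_1,\dots,f_N$ are simultaneously first integrals of a system $\dot{\mathbf x}=\mathbf X$ if and only if at each point the vector $\mathbf X=(\dot x_1,\dots,\dot x_N,\dot y_1,\dots,\dot y_N)$ annihilates $df_1,\dots,df_N$, i.e.\ lies in the kernel of the $N\times 2N$ Jacobian $M=(A\mid B)$, where $A=(\partial f_j/\partial x_k)$ and $B=(\partial f_j/\partial y_k)$. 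Since the $f_j$ are independent, $\operatorname{rank}M=N$, so this kernel is an $N$-dimensional distribution, and the theorem is just a convenient parametrization of it. A short block-determinant computation identifies the two hypotheses: $|S|_0=\pm\det B$, while $|S|_N=\pm\det\big[\,\partial_{x_N}f\mid\partial_{y_2}f\mid\cdots\mid\partial_{y_N}f\,\big]$.

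For part i), $|S|_0\neq0$ means $B$ is invertible, so $A\dot x+B\dot y=0$ determines $\dot y$ uniquely from $\dot x$, with $\dot x$ free. I would first prove the ``if'' part: for an arbitrary $H$, substitute \eqref{550} into $\mathbf X(f_\alpha)=\sum_k\dot x_k\,\partial_{x_k}f_\alpha+\sum_k\dot y_k\,\partial_{y_k}f_\alpha$; the terms from $\dot x_k=\partial H/\partial y_k$ and $-\partial H/\partial x_k$ assemble into the bracket $\{H,f_\alpha\}$ of \eqref{55}, and the correction term contributes $\frac1{|S|_0}\sum_j\{H,f_j\}\sum_k W_{jk}\,\partial_{y_k}f_\alpha$, where $W_{jk}$ is the determinant multiplying $\{H,f_j\}$ in \eqref{550}. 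Everything then reduces to the single identity
\[
\sum_{k=1}^{N} W_{jk}\,\frac{\partial f_\alpha}{\partial y_k}=-\,|S|_0\,\delta_{j\alpha},
\]
which is Cramer's rule in disguise: $W_{jk}$ is the $(j,N{+}k)$ cofactor of $\begin{pmatrix}A&B\\ I&0\end{pmatrix}$, and deleting its $j$-th row and $k$-th column with $j,k\le N$ annihilates the $k$-th row of the identity block, so all cofactors with both indices $\le N$ vanish; the expansion $\sum_m C_{jm}M_{\alpha m}=\delta_{j\alpha}\det M$ then gives the display (up to sign bookkeeping). For the ``only if'' part, given any system admitting the $f_j$ as first integrals, choose $H$ with $\partial H/\partial y_k=\dot x_k$: then $\dot y$ is forced by $A\dot x+B\dot y=0$, and since the right-hand side of \eqref{550} solves the same relations, the systems coincide.

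For part ii), $|S|_0=0$ makes $B$ singular, so $\dot x$ can no longer be prescribed freely; the non-vanishing of $|S|_N$ says instead that $\dot x_1,\dots,\dot x_{N-1},\dot y_1$ form a complete transverse set, the remaining components being recovered from $M\mathbf X=0$. The kernel is still $N$-dimensional, but one function $H$ now captures only part of the freedom, which is exactly why the extra arbitrary $\lambda$ enters \eqref{5}: the last term there is a normalized generator of $\ker M$ not reachable through $H$ alone. I would verify that this $\lambda$-term lies in $\ker M$ by an analogous --- though more delicate --- determinant identity exploiting $\det B=0$, and that $H$ together with $\lambda$ sweeps out all of $\ker M$; this is again a rank/Cramer computation, now with $|S|_N$ in the denominator in place of $|S|_0$.

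The step I expect to be the main obstacle is precisely the determinant bookkeeping in the forward direction --- establishing $\sum_k W_{jk}\,\partial_{y_k}f_\alpha=-|S|_0\delta_{j\alpha}$ with the right sign --- and, in part ii), pinning down the one ``extra'' direction cleanly, so that the $\lambda$-term is simultaneously annihilated by every $df_\alpha$ and independent, modulo the $H$-part, of everything else; equivalently, showing that $H$ and $\lambda$ neither under- nor over-count the $N$-dimensional space of admissible vector fields.
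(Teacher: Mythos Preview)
Your forward verification in part (i) is exactly the route the paper takes: compute $\dot f_\alpha$ along the flow \eqref{550}, recognise the Hamiltonian contribution as $\{H,f_\alpha\}$, and cancel it against the correction term via the cofactor identity
\[
\sum_{k=1}^{N}\{f_1,\ldots,f_{j-1},y_k,f_{j+1},\ldots,f_N,x_1,\ldots,x_N\}^*\,\frac{\partial f_\alpha}{\partial y_k}\;=\;|S|_0\,\delta_{j\alpha},
\]
which, as you say, is Cramer expansion for the block matrix $\left(\begin{smallmatrix}A&B\\ I&0\end{smallmatrix}\right)$. The paper's proof is precisely this one-line computation; for part (ii) it repeats the same manoeuvre and notes that the extra $\lambda$-term contributes $\lambda\,|S|_0\,\partial_{x_N}f_\alpha$, which vanishes under the hypothesis $|S|_0=0$. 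So on the forward direction your plan and the paper's proof coincide.

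Where you go beyond the paper is the converse, and there your argument has a genuine gap. The step ``choose $H$ with $\partial H/\partial y_k=\dot x_k$'' presupposes that such an $H$ exists, but that is an integrability condition: it forces $\partial_{y_l}\dot x_k=\partial_{y_k}\dot x_l$ for all $k,l$, which an arbitrary vector field in $\ker M$ has no reason to satisfy. Since $B$ is invertible in case (i), the $x$-components of a general element of $\ker M$ are $N$ \emph{unconstrained} functions of $(x,y)$, so the family \eqref{550} is strictly smaller than the full $N$-dimensional distribution you correctly identified at the outset; a single potential $H$ cannot parametrise all of it. The same objection applies to part (ii). The paper's own proof does not attempt the converse at all --- it only checks that the displayed systems admit the $f_j$ --- so if the theorem is read as producing a Hamiltonian-flavoured family of admissible systems, your forward argument already matches the paper; if it is read literally as a complete characterisation, neither argument establishes it, and your counting of degrees of freedom in fact shows why.
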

\section{Proof of Theorems \ref{main5}}
 \begin{proof}[Proof of Theorem \ref{main5}]
In view of the relations
\[\displaystyle\sum_{j,k=1}^{N}\{H,f_j\}\{f_1,\ldots,f_{j-1},y_k,f_{j+1},\ldots,f_N\}^*\,\dfrac{\partial f_\alpha}{\partial y_k}=\{H,\,f_\alpha\},\]
we deduce that $ \dot{f}_\alpha=\{H,\,f_\alpha\}-\{H,\,f_\alpha\}=0,$
for $\alpha=1,\ldots,N.$  Thus the given functions are
constant along the solutions of the given  differential system. This is
the proof of part i) of the theorem.

\smallskip

Now we prove the part ii). After computation we obtain
\[
  \dot{f}_\alpha=\lambda\{f_1,\ldots,f_N\}^*_0\frac{\partial\,f_\alpha}{\partial
  x_N},
\]for $\alpha=1,\ldots,N,$
hence in view of the assumptions we obtain the proof of part ii).  In short Theorem \ref{main5} is proved.
\end{proof}
We shall illustrate these results in the following example.
\bigskip

\noindent{\bf Example 1.} For the case when
\[ \begin{array}{rl}
f_1=H=&\dfrac{y^2_1}{2m_1}+\dfrac{y^2_2}{2m_2}+\dfrac{y^2_3}{2m_3}+\dfrac{a}{(x-y)^2}+\dfrac{b}{(z-x)^2}\dfrac{c}{(z-y)^2},\vspace{0.20cm}\\
f_2=&x_1y_1+x_2y_2+x_3y_3,\vspace{0.20cm}\\
f_3=&y_1+y_2+y_3,
\end{array}
\]where $a,b,c$ are constants, we obtain that
\[|S|_0=\dfrac{(x_2-x_1)y_3}{m_3}+\dfrac{(x_1-x_3)y_2}{m_2}+\dfrac{(x_3-x_2)y_1}{m_1}.\]
By considering that \[\{f_1,f_2\}=2f_1,\quad\{f_3,f_2\}=-f_3,\quad \{f_1,f_3\}=0,\] we deduce that differential system \eqref{550}
 takes the form
  \[ \begin{array}{rl}
   \dot{x}_1=&\dfrac{\partial H}{\partial y_1},\quad\dot{x}_2=\dfrac{\partial H}{\partial y_2},\quad\dot{x}_3=\dfrac{\partial H}{\partial y_3},\vspace{0.20cm}\\
 \dot{y}_1=&-\dfrac{\partial
H}{\partial
 x_1}+2H\left(\dfrac{y_3}{m_3}-\dfrac{y_2}{m_2}\right),\vspace{0.20cm}\\
  \dot{y}_2=&-\dfrac{\partial
H}{\partial
 x_2}+2H\left(\dfrac{y_1}{m_1}-\dfrac{y_3}{m_3},\right),\vspace{0.20cm}\\

 \dot{y}_3=&-\dfrac{\partial
H}{\partial
 x_3}+2H\left(\dfrac{y_2}{m_2}-\dfrac{y_1}{m_1}\right).\vspace{0.20cm}\\
  \end{array}
\]

We observe that these first integrals appear when we  study the movement
of three particles with masses $m_1,m_2,m_3,$ which interact with each other with a force inversely proportional
 to the cube of the distance between them.

\noindent{\bf Example 2.} It is well known that in the Kepler
problem there are six first integrals
\[\textbf{M}=(M_1,M_2,M_3)=\textbf{x}\times
\textbf{y}=\textbf{c}_1,\quad
\textbf{W}=(W_1,\,W_2,\,W_3)=\textbf{y}\times
\textbf{M}+\dfrac{\mu \textbf{x}}{r}=\textbf{c}_2,\] where
$r=||\textbf{x}||=\sqrt{x^2_1+x^2_2+x^2_3
}.$

We determine differential system with three first integrals
$(M_1,M_2,M_3)$ and $(W_1,\,W_2,\,W_3).$ For the first case we
obtain that
\[ \begin{array}{rl}
f_1=M_1,\,f_2=M_2,\,f_3=M_3,\\
 |S|_1=x_1M_3,\quad
|S|_0=0,\\
\{f_1,f_2\}´=f_3,\quad \{f_2,f_3\}´=f_1,\quad\{f_3,f_1\}´=f_2.
\end{array}
\] Choosing $H=\dfrac{1}{2}\left(y^2_1+y^2_2+y^2_3\right)$ then system \eqref{5} takes the form
\[\dot{\textbf{x}}=\textbf{y},
\quad \dot{\textbf{y}}=\lambda\,\textbf{x}.
 \]
  These equations can be
interpreted as geodesic flow of a particle which is constrained to
move on the unit sphere $x^2_1+x^2_2+x^2_3=1.$

For the second case we obtain that
\[\begin{array}{rl}
f_1=W_1,\,f_2=W_2,\,f_3=W_3,\\
|S|_0=2\left<\textbf{x},\textbf{y}\right>||\textbf{M}||^2\ne
0,\end{array}\]where $\left<\textbf{x},\textbf{y}\right>=x_1y_1+x_2y_2+x_3y_3,$ \,$||\textbf{M}||^2=M^2_1+M^2_2+M^2_3,$  Taking $H=\dfrac{1}{2}\left(y^2_1+y^2_2+y^2_3\right)$ then from  \eqref{550} we obtain the differential system
\[\dot{\textbf{x}}=\textbf{y},\quad \dot{\textbf{y}}=-\dfrac{\textbf{x}}{||\textbf{x}||^3}.
\]

It is easy to observe that the first integrals $W_1,\,W_2,\,W_3$ admit the
representation
\[ W_j=\dfrac{\partial F}{\partial x_j}\] for $j=1,2,3$ where
\[ F=1/2\left(||\textbf{x}||^2||\textbf{y}||^2-\left<\textbf{x},\,\textbf{y}\right>+\mu
||\textbf{x}||\right).\]

\bigskip

\noindent{\bf Example 3.} We determine the differential system
\eqref{5} in the case
\[\begin{array}{rl}
f_1=\Upsilon_1x_1+\Upsilon_2x_2+\Upsilon_3x_3,\quad
f_2=\Upsilon_1y_1+\Upsilon_2y_2+\Upsilon_3y_3,\\
 f_3=\dfrac{1}{2}\left(
\Upsilon_1(x^2_1+y^2_1)+\Upsilon_2(x^2_2+y^2_2)+\Upsilon_3(x^2_3+y^2_3)\right),
\end{array}\] where $\Upsilon_j=constants$ for $j=1,2,3.$
In this case we have
\[\begin{array}{rl}
|S|_0=&0,\quad |S|_1=a_2a^2_3(y_3-y_2),\vspace{0.20cm}\\
\{f_2,f_1\}=&\Upsilon_1+\Upsilon_2+\Upsilon_3,\quad \{f_3,f_1\}= f_2 ,\quad \{f_3,f_2\}=- f_1
\end{array}\]
Clearly that $|S|_1\ne 0$ if $y_2-y_3\ne 0.$

Taking \[\lambda_4=\dfrac{1}{\Upsilon_1}\dfrac{\partial
H}{\partial y_1},\quad
\lambda_5=\dfrac{1}{\Upsilon_2}\dfrac{\partial H}{\partial y_2},\]
we obtain that \eqref{5} takes the form
\[
\begin{array}{rl}
\Upsilon_1\dot{x}_1=&\dfrac{\partial H}{\partial y_1},\quad \Upsilon_1\dot{x}_2=\dfrac{\partial H}{\partial y_2},\quad \Upsilon_1\dot{x}_3=\dfrac{\partial H}{\partial y_3},\vspace{0.20cm}\\
\Upsilon_1\dot{y}_1=&-\dfrac{\partial H}{\partial
x_1}+\lambda{(y_2-y_3)},\vspace{0.20cm}\\
\Upsilon_2\dot{y}_2=&-\dfrac{\partial
H}{\partial x_2}+\lambda(y_3-y_1),\vspace{0.20cm}\\
\Upsilon_3\dot{y}_3=&-\dfrac{\partial H}{\partial
x_3}+\lambda(y_1-y_2),
\end{array}
\]where $\lambda$ is an arbitrary function and $H$ is the Hamiltonian function such that
\[\displaystyle\sum_{j=1}^3\frac{1}{\Upsilon_j}\left(\dfrac{\partial H}{\partial
y_j}\dfrac{\partial f\alpha}{\partial x_j}-\dfrac{\partial
H}{\partial x_j}\dfrac{\partial f\alpha}{\partial y_j}\right)=0,\]
for $\alpha=1,2,3.$

In particular if
\[H=\displaystyle\sum_{\substack{m,k=1\\m\ne{k}}}^{3}\Upsilon_m\Upsilon_k\lg\sqrt{(x_k-x_m)^2+(y_k-y_m)^2},\]
and taking $\lambda=0$ we obtain Hamiltonian differential
equations of motion of three vortices with intensities
$\Upsilon_j$ for $j=1,2,3,$ (see for instance \cite{Koz}).

\bigskip

\noindent{\bf Example 4.} Now we study the following case

\[f_\nu=(Ax_\nu+By_\nu)^2+\displaystyle\sum_{j\ne \nu}^N\frac{\left(x_\nu
y_j-x_jy\nu\right)^2}{a_\nu-a_j},\] for $\nu=1,\ldots,N,$ where
$A$ and $B$ are constants.

It is easy to show that in all the cases the first integrals are
in involution . Thus if we determine $H=H(f_1,\ldots,f_N)$ then we obtain
the completely integrable Hamiltonian system.

After some computation we obtain that $|S|_0\ne 0$
if $B\ne 0.$

We apply Theorem \ref{main5} for these cases when $N=3.$

For the case when $B=0$  we obtain, after some calculations that
$|S|_0=0$ and
\[|S|_1=\dfrac{K}{\Delta}x_3x_1,\quad
|S|_2==\dfrac{K}{\Delta}x_3x_2,\quad
|S|_3=\dfrac{K}{\Delta}x_3x_3,\] where
$\Delta=(a_1-a_2)(a_2-a_3)(a_1-a_3),$ and
\[\begin{array}{rl}
K=&a_1(x_2y_3-x_3y_2)\left(x_2(x_1y_2-x_2y_1)-x_1(x_3y_1-x_1y_3)\right)+\vspace{0.20cm}\\
&a_2(x_3y_1-x_1y_3)\left(x_3(x_2y_3-x_3y_2)-x_1(x_1y_2-x_2y_1)\right)+\vspace{0.20cm}\\
&a_3(x_1y_2-x_2y_3)\left(x_1(x_3y_1-x_1y_3)-x_2(x_2y_3-x_3y_2)\right)\end{array}\]
thus the differential system \eqref{5} takes the form
\[
\begin{array}{rl}
\dot{x}_k=&\dfrac{\partial H}{\partial y_k},\vspace{0.20cm}\\
\dot{y}_k=&-\dfrac{\partial H}{\partial x_k}+\lambda\,x_k.
\end{array}
\]These differential equations described the behavior of the
particle with Hamiltonian $H$ and constrained to move on the
sphere $x^2_1+x^2_2+x^2_3=1.$

 In particular if we take
\[
\begin{array}{rl}
H=&\dfrac{1}{2}\left(a_1f_1+a_2f_2+a_3f_3\right)=
1/2\left(||\textbf{x}||^2||\textbf{y}||^2-\left<\textbf{x},\,\textbf{y}\right>+a_1x^2_1+a_2x^2_2+a_3x^2_3\right)\\
\lambda=&\Psi (x^2_1+x^2_1+x^2_1),
\end{array}
\]  then from the above equations
we deduce the equation of motion of a particle on an 3-dimensional
sphere, with an anisotropic harmonic potential. This system is one
of the best understood integrable systems of classical mechanics
(for more details see \cite{Moser}).

For the case when $B\ne 0$  we obtain that $f_1,f_2,f_3\}_0\ne{0}.$

\section*{Acknowledgements}

The first author was partly supported by the Spanish Ministry of
Education through projects TSI2007-65406-C03-01 "E-AEGIS" and
Consolider CSD2007-00004 "ARES".

\end{document}